\documentclass{amsart}
\usepackage{amsmath,amssymb,amsthm,mathtools}
\usepackage{tikz}
\usetikzlibrary{arrows.meta,calc}
\usepackage{subcaption}
\usepackage{hyperref}

\theoremstyle{plain}
\newtheorem{theorem}{Theorem}[section]
\newtheorem{lemma}[theorem]{Lemma}
\newtheorem{corollary}[theorem]{Corollary}
\numberwithin{equation}{section}
\newtheorem{problem}[theorem]{Problem}

\newcommand{\Z}{\mathbb Z}
\newcommand{\W}{\mathbb W}
\DeclareMathOperator{\GL}{GL}
\DeclareMathOperator{\im}{im}

\newcommand{\br}[3]{[#1,#2,#3]}
\newcommand{\spn}[1]{\operatorname{span}_{\Z_2}\{#1\}}
\newcommand{\wideand}{\quad \text{ and } \quad}
\newcommand{\dual}{\ast}

\definecolor{pathA}{RGB}{29,84,131}
\definecolor{pathB}{RGB}{163,69,39}
\tikzset{
  edge/.style={line width=.65pt},
  boundary/.style={line width=.95pt},
  contractA/.style={draw=pathA,line width=2pt},
  contractB/.style={draw=pathB,line width=2pt,dash pattern=on 4pt off 1.8pt},
  dot/.style={circle,fill=black,inner sep=1.45pt},
  lab/.style={font=\small,inner sep=1.5pt},
  arrow/.style={-{Stealth[length=2.1mm]},line width=.7pt},
  faint/.style={draw=black!40,densely dashed,line width=.55pt}
}

\def\markpaths{0}
\newcommand{\fivepatch}[8]{%
    \coordinate (x0) at (0,1.9);
    \coordinate (x1) at (-1.65,.95);
    \coordinate (x2) at (-1.65,-.95);
    \coordinate (x3) at (0,-1.9);
    \coordinate (x4) at (1.65,-.95);
    \coordinate (x5) at (1.65,.95);
    \coordinate (u) at (-.62,0);
    \coordinate (v) at (.62,0);
    \draw[boundary] (x0)--(x1)--(x2)--(x3)--(x4)--(x5)--cycle;
    \foreach \j in {0,1,2,3} \draw[edge] (u)--(x\j);
    \foreach \j in {0,3,4,5} \draw[edge] (v)--(x\j);
    \draw[edge] (u)--(v);
    \ifnum\markpaths=1
        \draw[contractA] (x0)--(u)--(x2);
        \draw[contractB] (x3)--(v)--(x5);
    \fi
    \foreach \z in {x0,x1,x2,x3,x4,x5,u,v} \node[dot] at (\z) {};
    \node[lab,left=4pt] at (u) {$#1$};
    \node[lab,right=4pt] at (v) {$#2$};
    \node[lab,above=3pt] at (x0) {$#3$};
    \node[lab,above left=3pt] at (x1) {$#4$};
    \node[lab,below left=3pt] at (x2) {$#5$};
    \node[lab,below=3pt] at (x3) {$#6$};
    \node[lab,below right=3pt] at (x4) {$#7$};
    \node[lab,above right=3pt] at (x5) {$#8$};
}

\newcommand{\sixpatch}[9]{%
    \coordinate (x0) at (0,1.9);
    \coordinate (x1) at (-1.65,.85);
    \coordinate (x2) at (-1.65,-.85);
    \coordinate (x3) at (0,-1.9);
    \coordinate (x4) at (1.55,-1.30);
    \coordinate (x5) at (2.0,0);
    \coordinate (x6) at (1.55,1.30);
    \coordinate (u) at (-.60,0);
    \coordinate (v) at (.70,0);
    \fill[pathA!9] (u)--(x1)--(x2)--cycle;
    \draw[boundary] (x0)--(x1)--(x2)--(x3)--(x4)--(x5)--(x6)--cycle;
    \foreach \j in {0,1,2,3} \draw[edge] (u)--(x\j);
    \foreach \j in {0,3,4,5,6} \draw[edge] (v)--(x\j);
    \draw[edge] (u)--(v);
    \ifnum\markpaths=1
        \draw[contractA] (x0)--(u)--(x3);
        \draw[contractB] (x4)--(v)--(x6);
    \fi
    \foreach \z in {x0,x1,x2,x3,x4,x5,x6,u,v} \node[dot] at (\z) {};
    \node[lab,left=4pt] at (u) {$#1$};
    \node[lab,above=4pt] at (v) {$#2$};
    \node[lab,above=3pt] at (x0) {$#3$};
    \node[lab,above left=3pt] at (x1) {$#4$};
    \node[lab,below left=3pt] at (x2) {$#5$};
    \node[lab,below=3pt] at (x3) {$#6$};
    \node[lab,below right=3pt] at (x4) {$#7$};
    \node[lab,right=3pt] at (x5) {$#8$};
    \node[lab,above right=3pt] at (x6) {$#9$};
}

\definecolor{accent}{RGB}{32,76,111}
\tikzset{
    edge/.style={line width=.65pt},
    added/.style={draw=accent,line width=.8pt},
    dot/.style={circle,fill=black,inner sep=1.4pt},
    newdot/.style={circle,fill=accent,inner sep=1.5pt},
    lab/.style={font=\small,inner sep=1.5pt},
    arr/.style={-{Stealth[length=2mm]},line width=.6pt}
}
\newcommand{\subdivisionpicture}[1]{%
    \begin{tikzpicture}[scale=.80]
        \path[use as bounding box] (-1.66,-.31) rectangle (1.66,2.85);
        \coordinate (A) at (-1.3,0);
        \coordinate (B) at (0,2.4);
        \coordinate (C) at (1.3,0);
        \coordinate (X) at (0,.74);
        \coordinate (Y) at (-.48,1.03);
        \coordinate (Z) at (.48,1.03);
        \ifnum#1=0
            \fill[accent!6] (A)--(B)--(C)--cycle;
        \fi
        \ifnum#1=1
            \fill[accent!9] (A)--(B)--(X)--cycle;
        \fi
        \ifnum#1=2
            \fill[accent!9] (B)--(C)--(X)--cycle;
        \fi
        \draw[edge] (A)--(B)--(C)--cycle;
        \ifnum#1>0
            \draw[added] (A)--(X)--(B) (X)--(C);
        \fi
        \ifnum#1>1
            \draw[added] (A)--(Y)--(B) (Y)--(X);
        \fi
        \ifnum#1>2
            \draw[added] (B)--(Z)--(C) (Z)--(X);
        \fi
        \foreach \v/\name/\pos in {A/a/below left,B/b/above,C/c/below right}{%
        \node[dot] at (\v) {};
        \node[lab,\pos] at (\v) {$\name$};
        }
        \ifnum#1>0
            \node[newdot] at (X) {};
            \node[lab,below=3pt,text=accent] at (X) {$x$};
        \fi
        \ifnum#1>1
            \node[newdot] at (Y) {};
            \node[lab,left=3pt,text=accent] at (Y) {$y$};
        \fi
        \ifnum#1>2
            \node[newdot] at (Z) {};
            \node[lab,right=3pt,text=accent] at (Z) {$z$};
        \fi
    \end{tikzpicture}%
}

\title[Small covers and the Four Color Theorem]{Small covers and the Four Color Theorem}

\author[S. Choi]{Suyoung Choi}
\address{Department of Mathematics, Ajou University, 206, World Cup-ro, Yeongtong-gu, Suwon 16499, Republic of Korea}
\email{schoi@ajou.ac.kr}

\author[R. Kim]{Ringi Kim}
\address{Department of Mathematics, Inha University, 100, Inha-ro, Michuholgu, Incheon 22212, Republic of Korea}
\email{ringikim@inha.ac.kr}

\date{\today}
\thanks{The first named author was supported by the National Research Foundation of Korea Grant funded by the Korean Government (RS-2025-00521982).
The second named author was  supported by the National Research Foundation of Korea Grant funded by the Korean Government (RS-2025-00561867) and by an INHA UNIVERSITY Research Grant.}
\subjclass[2020]{Primary 57S12, Secondary 05C10, 52B10}
\keywords{Small cover, simple $3$-polytope, plane triangulation, characteristic function, Four Color Theorem}

\begin{document}

\begin{abstract}
We give a combinatorial proof that every simple convex $3$-polytope admits a small cover without using the Four Color Theorem.
We also prove that the assertion that every simple plane triangulation admits a mod~$2$ characteristic function with at most five values is equivalent to the Four Color Theorem.
\end{abstract}

\maketitle

\section{Introduction}\label{sec:intro}

We write $\Z_2$ for both the field with two elements and the cyclic group of order two.
Throughout the paper, plane graphs are drawn on $S^2$.
A \emph{plane triangulation} is a finite connected loopless plane multigraph in which the boundary of every face is a cycle of length three.
Parallel edges are allowed, but each face has three distinct vertices.
The graph $K_3$ with its two faces is included.
A plane triangulation is called \emph{simple} if it has no parallel edges.

Put
$$
    \W=\Z_2^3,\quad \W^\times=\W\setminus\{0\}, \wideand \br{a}{b}{c}=\det(a,b,c)\in\Z_2.
$$
A \emph{mod~$2$ characteristic function} on a plane triangulation $G$ is a map $\lambda\colon V(G)\to\W^\times$ such that
\begin{equation}\label{eq:char}
    \br{\lambda(x)}{\lambda(y)}{\lambda(z)}=1 \quad\text{for every facial triangle }xyz.
\end{equation}
Thus the labels on each facial triangle form a basis of $\W$.
The condition is imposed on facial triangles, not on all $3$-cycles.
In particular, adjacent vertices have distinct labels.
We use \emph{characteristic function} for short.

For $r\in\{4,5,6,7\}$, let $E_r$ denote the following assertion.
\begin{quote}
    \noindent\textup{$E_r$:} Every simple plane triangulation admits a mod~$2$ characteristic function with at most $r$ values.
\end{quote}
The implications
$$
    E_4\ \Longrightarrow\ E_5\ \Longrightarrow\ E_6\ \Longrightarrow\ E_7
$$
are immediate.
Moreover, $E_4$ is equivalent to the Four Color Theorem.
Indeed, a proper coloring with at most four colors gives a characteristic function by replacing the colors with
$$
    e_1,\quad e_2,\quad e_3,\wideand e_1+e_2+e_3,
$$
where $e_1,e_2,e_3$ form the standard basis of $\W$.
Any three of these four vectors form a basis of $\W$.
Conversely, a characteristic function with at most four values is a proper coloring with at most four colors.
Every finite simple planar graph is a subgraph of a simple plane triangulation, so $E_4$ gives the Four Color Theorem.

We prove the following two results.
The proof of the first does not use the Four Color Theorem.

\begin{theorem}\label{thm:graph}
    Every plane triangulation admits a mod~$2$ characteristic function.
\end{theorem}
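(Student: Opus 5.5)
We argue by induction on the number of vertices, using edge contractions, in the spirit of the classical proof of the Five Color Theorem. The key linear-algebra fact is this: given a basis-type constraint at a vertex $v$, the admissible labels for $v$ are the vectors of $\W^\times$ that avoid a union of planes. For a facial triangle $vxy$ we need $\lambda(v)\notin\operatorname{span}\{\lambda(x),\lambda(y)\}$, and that plane contains $3$ of the $7$ nonzero vectors.

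\textbf{Base cases and reductions.} If $G$ is $K_3$, assign $e_1,e_2,e_3$. If $G$ has a separating $2$-cycle (parallel edges $xy$ bounding a region that is not a face), we split $G$ along it into two smaller plane triangulations that share the edge $xy$. Each has a characteristic function by induction. The labels $\lambda(x),\lambda(y)$ on one side can be moved to those on the other by an element of $\GL_3(\Z_2)$, since $\GL_3(\Z_2)$ acts transitively on ordered pairs of linearly independent vectors. Applying that element to one side makes the two functions agree on $\{x,y\}$, so they glue. The gluing is valid because condition \eqref{eq:char} concerns only facial triangles. We handle a separating $3$-cycle the same way, using transitivity on ordered bases. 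So we may assume $G$ is simple with no separating triangles, apart from a few small cases.

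\textbf{Main step.} By Euler's formula there is a vertex $v$ of degree $3$, $4$ or $5$.
\begin{itemize}
\item If $\deg v=3$, delete $v$. Its neighbours form a face, whose labels form a basis $a,b,c$, and we set $\lambda(v)=a+b+c$. This works because $[a+b+c,a,b]=[c,a,b]=1$, and similarly for the other two faces.
\item If $\deg v=4$ with link $x_1x_2x_3x_4$, contract the edge $vx_1$. This produces a smaller plane triangulation, possibly with parallel edges, which is why the theorem is stated for multigraphs. Extend by induction. Now $v$ must avoid the planes $\langle\lambda(x_i),\lambda(x_{i+1})\rangle$. From the contracted faces we know $\lambda(x_1),\lambda(x_2),\lambda(x_3)$ and $\lambda(x_1),\lambda(x_3),\lambda(x_4)$ are bases.
\item If $\deg v=5$, contract $vx_1$ in the same way.
\end{itemize}

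\textbf{The obstacle: recolouring $v$.} In the degree-$4$ and degree-$5$ cases the contracted graph hands $v$ the label of $x_1$, and we must show some legal label remains; counting alone may fail. The plan is to use the freedom of choosing which edge to contract, or to contract two edges $x_1v$ and $vx_3$ simultaneously, as in the patch figures. Contracting both gives $v$ the two neighbour faces' constraints in a controlled pattern. We then do a case analysis on the rank and configuration of $\lambda(x_1),\dots,\lambda(x_5)$ in $\W^\times$.

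The point to check is that the union of the $\deg v$ forbidden planes, each of size $3$ in $\W^\times$ and containing the two labels of its neighbours, does not cover all $7$ vectors. Five planes could cover $\W^\times$, since the Fano plane has $7$ lines. So the argument must exploit the structure: consecutive forbidden planes share a point, and the contraction forces some planes to coincide. We expect this configuration analysis, organised via duality (planes in $\W$ correspond to nonzero vectors in $\W^\dual$), to be the hardest part. If a direct choice fails, we would fall back on a Kempe-type local modification, composing labels on one side of a path with a linear automorphism.
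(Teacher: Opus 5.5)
Your splitting steps and the degree-$3$ case are fine. Degree $4$ also needs no new idea: whenever consecutive labels around a $4$-cycle are distinct, Lemma~\ref{lem:shortcycle} gives a vector outside all four planes, so deleting $v$ (or contracting $vx_1$) works.

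The genuine gap is degree $5$, which is the entire difficulty and which you leave as an expected configuration analysis. That analysis cannot succeed for a single degree-$5$ vertex.
\begin{itemize}
\item After contracting $vx_1$, nothing in the inductive hypothesis prevents the labels of $x_1,\dots,x_5$ from being $e_1,e_2,e_3,e_1+e_2,e_1+e_3$. The three faces $x_1x_2x_3$, $x_1x_3x_4$, $x_1x_4x_5$ of the contracted triangulation then carry bases. Yet the five planes $\langle\lambda(x_i),\lambda(x_{i+1})\rangle$ cover all seven vectors of $\W^\times$, so $v$ has no admissible label.
\item Contracting the path $x_1-v-x_3$ instead leaves boundary labels $(a,b,a,c,d)$ with $\br{a}{c}{d}=1$ and $a\ne b$. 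The planes $\langle a,c\rangle$, $\langle c,d\rangle$, $\langle d,a\rangle$ force $\lambda(v)=a+c+d$. This violates the faces $vx_1x_2$ and $vx_2x_3$ whenever $b\in\{c+d,\,a+c+d\}$.
\item Switching to another edge does not help, because induction hands you some characteristic function on the smaller triangulation with no control over which one.
\item The Kempe-type fallback is also unsupported. Applying $g\in\GL(3,\Z_2)$ on one side of a curve preserves the faces on that side, but a straddling face would need $\br{g(a)}{g(b)}{c}=1$. So there is no analogue of the invariance that makes Kempe chains work for colorings.
\end{itemize}

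The missing idea is that an isolated degree-$5$ vertex is not reducible, so you need a larger unavoidable set. The paper invokes Wernicke's theorem (Lemma~\ref{lem:light}): once degrees $3,4$ and separating triangles are excluded, there is an edge $uv$ of degrees $(5,5)$ or $(5,6)$, and the two vertices are reduced together. Write the boundary of the disc around $uv$ as $x_0x_1\cdots$. One contracts the disjoint induced paths $x_0-u-x_2$ and $x_3-v-x_5$ (resp.\ $x_0-u-x_3$ and $x_4-v-x_6$) and deletes the resulting digons, losing four vertices.

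In the $(5,5)$ case the boundary labels become $(a,b,a,c,d,c)$. The coupling $q=p+a+c$ with $p\notin\spn{a,c}$ satisfies the four faces $ux_2x_3$, $vx_5x_0$, $uvx_0$, $uvx_3$ simultaneously. What remains is $\br{p}{a}{b}=\br{q}{c}{d}=1$, two conditions on independent coordinates of $p$. In the $(5,6)$ case $p=a+b+c$ handles $u$. Then $v$ only sees the short cycle $p,a,d,e$, so Corollary~\ref{cor:path} supplies $q$.

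This joint choice of labels for two adjacent vertices is what your proposal lacks. It is made possible by the repeated boundary labels that the double contraction creates.
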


The second result shows that allowing a fifth value does not weaken this universal existence assertion.

\begin{theorem}\label{thm:five-four}
    The Four Color Theorem is equivalent to the assertion that every simple plane triangulation admits a mod~$2$ characteristic function with at most five values.
\end{theorem}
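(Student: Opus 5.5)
The direction ``Four Color Theorem $\Rightarrow E_5$'' is immediate from the discussion above: the Four Color Theorem gives $E_4$, and $E_4 \Rightarrow E_5$. For the converse, I assume $E_5$. Since every finite simple planar graph is a subgraph of a simple plane triangulation, it suffices to $4$-color an arbitrary simple plane triangulation $G$. The plan is to apply $E_5$ to an auxiliary triangulation $G'$, chosen so that the labels of the original vertices are forced into a fixed set of four vectors.

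The auxiliary triangulation is obtained by stacking: $G'$ arises from $G$ by inserting one new vertex $x_F$ into each face $F$ of $G$ and joining it to the three corners of $F$. This keeps the graph a simple plane triangulation, so $E_5$ gives a characteristic function $\lambda$ on $G'$ whose value set $S\subseteq\W^\times$ has $|S|\le 5$. The key local observation is the following. Let $F=abc$ be a face of $G$ with labels $\alpha,\beta,\gamma$; these form a basis of $\W$, since $F$ is not a face of $G'$ but the basis property still follows from the faces around $x_F$. The three faces $abx_F$, $bcx_F$, $acx_F$ force $\lambda(x_F)$ to lie outside the three planes $\spn{\alpha,\beta}$, $\spn{\beta,\gamma}$, $\spn{\alpha,\gamma}$. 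The only such nonzero vector is $\alpha+\beta+\gamma$. Hence
\[
\{\alpha,\beta,\gamma,\alpha+\beta+\gamma\}
\]
is a \emph{frame}, that is, four vectors no three of which are dependent. Equivalently, it is the complement in $\W^\times$ of a line $\{u,v,u+v\}$ of the Fano plane.

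To justify that $\alpha,\beta,\gamma$ are independent: $[\alpha,\beta,x]=1$ gives $\alpha\ne\beta$, both nonzero, and $x\notin\spn{\alpha,\beta}$. If $\gamma\in\spn{\alpha,\beta}$, then $[\beta,\gamma,x]=1$ forces $\gamma\notin\{0,\beta\}$, so $\gamma\in\{\alpha,\alpha+\beta\}$. But then $[\alpha,\gamma,x]=0$ or $[\beta,\gamma,x]$ equals $[\beta,\alpha,x]$, and a short check of these cases gives a contradiction.

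Now I count frames. Each frame is the complement of a unique line, so a frame lies in $S$ if and only if its line contains $\W^\times\setminus S$. If $|S|=5$, then $\W^\times\setminus S=\{p,q\}$ and exactly one line contains both, namely $\{p,q,p+q\}$. So there is exactly one frame $T$ inside $S$. If $|S|\le4$, then $S$ either is itself a frame or contains none. Every face of $G$ produces a frame inside $S$, so in all cases there is a single frame $T\subseteq S$ containing the labels of all vertices of every face of $G$. Since every vertex of $G$ lies on some face, $\lambda(V(G))\subseteq T$ with $|T|=4$. Adjacent vertices of $G$ are adjacent in $G'$ and therefore receive distinct labels. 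Thus $\lambda|_{V(G)}$ is a proper coloring of $G$ with at most four colors, which proves the Four Color Theorem.

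The step I expect to need the most care is the local forcing lemma: that stacking forces the centre label to be $\alpha+\beta+\gamma$ and hence a frame. It is an easy linear-algebra check, but everything rests on it. The uniqueness of the frame inside a $5$-element set is then a small Fano-plane fact. The degenerate case $G=K_3$ poses no difficulty, since $K_3$ is trivially $3$-colorable.
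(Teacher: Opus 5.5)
\textbf{The local forcing lemma is false, and your argument breaks there.} Once $x_F$ is inserted, $abc$ is no longer a face of $G'$, so nothing forces $\alpha,\beta,\gamma$ to be independent. Your case check misses that $\gamma=\alpha+\beta$ is consistent with all three conditions:
\[
\br{\alpha}{\gamma}{x}=\br{\alpha}{\beta}{x}=1 \quad\text{and}\quad \br{\beta}{\gamma}{x}=\br{\beta}{\alpha}{x}=1 .
\]
In that case the three planes all equal $\spn{\alpha,\beta}$, and $\lambda(x_F)$ may be any of the four vectors off that plane. This is exactly the dependent case of Lemma~\ref{lem:shortcycle}. For instance, the labels $e_1,e_2,e_1+e_2$ on $abc$ and $e_3$ on $x_F$ satisfy everything, and $\{e_1,e_2,e_1+e_2\}$ lies in no frame.

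The failure is not a repairable technicality: the conclusion $\lambda(V(G))\subseteq T$ is itself false for stacked triangulations. Let $G$ be the square antiprism on $a_1a_2a_3a_4$ and $b_1b_2b_3b_4$, with faces $a_ib_ib_{i+1}$ and $a_ia_{i+1}b_{i+1}$, and with both squares stellated by new vertices $v$ and $w$. Label the vertices of $G$ as follows:
\begin{itemize}
\item[] $v$ and $w$ receive $e_1+e_2$;
\item[] the $a_i$ receive $e_1,e_2$ alternately;
\item[] the $b_i$ receive $e_3,e_1+e_2+e_3$ alternately.
\end{itemize}
In $G'$, give each centre in an antiprism face the remaining vector of the frame $\{e_1,e_2,e_3,e_1+e_2+e_3\}$. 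Give each centre in a face at $v$ the label $e_3$, and each centre in a face at $w$ the label $e_1$. One checks this is a characteristic function on $G'$ with five values, and its restriction to $V(G)$ uses all five.

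The underlying obstruction is parity. Write $\W^\times\setminus S=\{p,q\}$. The only mechanism that keeps $t=p+q$ off the vertices is the alternation argument of Lemma~\ref{lem:odd}. The neighbours of a vertex labelled $t$ have labels alternating between two cosets of $\langle t\rangle$, and this requires even degree. In a stacked triangulation every original vertex has degree $2\deg_G(v)$, so this obstruction never applies; in the example, $v$ and $w$ have degree $8$ in $G'$.

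The paper does the opposite. Lemma~\ref{lem:parity} uses three nested stellar subdivisions inside one face $abc$, raising $\deg b$ by $3$ and $\deg a,\deg c$ by $2$. This makes every vertex of $G'$ odd while keeping $G'[V(G)]=G$. Lemma~\ref{lem:odd} then shows that $\im\lambda'$ misses the whole plane $\{p,q,p+q\}$. Hence $\im\lambda'$ lies in the four-element set $A_\varepsilon$, in which any three distinct vectors form a basis, and restricting to $V(G)$ gives the four-coloring directly. With such an odd-degree auxiliary triangulation your frame counting is not needed. Without it, stacking alone cannot force the original labels into four values.
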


The assertion~$E_7$ is related to the existence of small covers.
A \emph{small cover} over a simple convex $n$-polytope $P$ is a closed $n$-manifold with a locally standard action of~$\Z_2^n$ whose orbit space is $P$.
Davis and Januszkiewicz~\cite{Davis-Januszkiewicz1991} introduced small covers and constructed them from mod~$2$ characteristic functions.
For $n=3$, the facets of $P$ correspond to the vertices of the simple plane triangulation dual to~$\partial P$.
The three facets meeting at a vertex of $P$ correspond to a facial triangle.
A characteristic function on this triangulation therefore gives a small cover over $P$.

Three-dimensional small covers have received considerable attention because they connect the combinatorics of simple polytopes with the topology and geometry of $3$-manifolds~\cite{Izmestiev2001,Kuroki2010,Lu-Yu2011,Nishimura2012,Cao-Lu2011,Grujic2022, Gorchakov2024}.
In particular, the orientable case is closely related to four-colorings and has been studied in~\cite{Nakayama-Nishimura2005, Erokhovets2022, Choi-Jang-Yoon2026}.

Davis and Januszkiewicz~\cite[Example~1.21]{Davis-Januszkiewicz1991} used the Four Color Theorem to observe that every simple convex $3$-polytope admits a small cover.
This turns a coloring theorem for planar graphs into an existence theorem for closed $3$-manifolds with prescribed orbit polytopes.
Our proof of Theorem~\ref{thm:graph} establishes this existence result without using the Four Color Theorem.
We note that such an attempt was previously made in~\cite{Liu2013}.
However, that proof contains a gap in a key reduction step, as explained in Appendix~\ref{app:Liu}.

Our proof of Theorem~\ref{thm:graph} instead works directly with plane triangulations.
The proof uses local reductions at adjacent vertices of degrees~$(5,5)$ or~$(5,6)$.
In each case, every characteristic function on the reduced triangulation allows the labels to be extended across the original configuration.
Parallel edges are allowed because they may arise during these reductions.
See Sections~\ref{sec:reductions} and~\ref{sec:light}.

To prove Theorem~\ref{thm:five-four}, we use stellar subdivisions to make all vertex degrees odd.
We apply $E_5$ to the enlarged triangulation and show by a parity argument that the labels lie in an affine plane of four nonzero vectors.
Restricting to the original vertices then proves $E_4$.
See Section~\ref{sec:five-four}.

Our results suggest the following natural question.
\begin{problem}
    Can one prove, without using the Four Color Theorem, that every simple plane triangulation admits a mod~$2$ characteristic function with at most six values?
    Alternatively, can one deduce the Four Color Theorem directly from this assertion?
\end{problem}

\section{Reductions of plane triangulations}\label{sec:reductions}

All vector spaces and determinants in this section are over $\Z_2$.
In particular, the order of the three arguments of a determinant does not affect its value.

\begin{lemma}\label{lem:shortcycle}
    Let $a_0,\ldots,a_{k-1}\in\W^\times$, where $k=3$ or $4$, and suppose that $a_i\ne a_{i+1}$, with indices taken modulo~$k$.  
    Then there is an element~$x\in\W^\times$ such that
    $$
        \br{x}{a_i}{a_{i+1}}=1 \qquad\text{for all }i.
    $$
\end{lemma}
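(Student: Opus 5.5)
The plan is to turn the problem into a system of affine linear equations over $\Z_2$ and check that the system is consistent. Since $a_i\ne a_{i+1}$ are both nonzero, they are linearly independent over $\Z_2$. So $P_i:=\spn{a_i,a_{i+1}}$ is a $2$-dimensional subspace of $\W$. The condition $\br{x}{a_i}{a_{i+1}}=1$ says exactly that $x\notin P_i$. Choose for each $i$ the unique nonzero linear functional $f_i\in\W^\dual$ with $\ker f_i=P_i$; concretely, $f_i(x)=\br{x}{a_i}{a_{i+1}}$, which is linear in $x$ and nonzero. The lemma then asks for a solution of
$$
    f_i(x)=1\qquad (i=0,\ldots,k-1).
$$
Any solution is automatically nonzero, so it lies in $\W^\times$.

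The key step is the standard solvability criterion for such a system. A system $f_i(x)=1$ has a solution if and only if every linear relation $\sum_{i\in S}f_i=0$ among the $f_i$ involves an even number of indices, counted with multiplicity in $S\subseteq\{0,\ldots,k-1\}$. This holds because the right-hand side must lie in the image of $x\mapsto(f_i(x))_i$, whose annihilator is the space of such relations.

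With $k\le4$, I will rule out odd relations by size. A relation with $|S|=1$ is impossible because each $f_i\ne0$. A relation with $|S|=3$, say $f_i+f_j+f_l=0$, forces two cases. If two of the functionals coincide, the third is zero, which is impossible. Otherwise the three kernels are three distinct planes whose normals sum to zero. Over $\Z_2$, three distinct nonzero functionals summing to zero have kernels sharing a common nonzero vector $v$: the intersection of the kernels of $f_i$ and $f_j$ is a line, and $f_l=f_i+f_j$ vanishes there.

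It therefore suffices to show that no three distinct planes among the $P_i$ share a nonzero vector. For $k=3$ and for $k=4$, any three indices form three cyclically consecutive ones, say $P_{i-1},P_i,P_{i+1}$; for $k=3$ this is all of them. We have $a_i\in P_{i-1}\cap P_i$ and $a_{i+1}\in P_i\cap P_{i+1}$. If the three planes are distinct, each pairwise intersection is a single line with one nonzero vector. A common nonzero vector $v$ would then satisfy both $v=a_i$ and $v=a_{i+1}$, contradicting $a_i\ne a_{i+1}$. So there is no odd relation, the system is solvable, and $x$ exists.

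The main obstacle is the correct bookkeeping in the consistency criterion: relations where some $f_i$ coincide (for example $P_0=P_1$, which happens when $a_2=a_0$) must be handled properly. This is why I reduce to "odd relations" and observe that coincident functionals only produce even-size relations or force a zero functional. If this feels too slick, the fallback is a direct case check. For $k=3$, take $x=a_0+a_1+a_2$ when the $a_i$ are independent, and any vector outside the common plane otherwise. For $k=4$, split into the cases $a_0=a_2$ or $a_1=a_3$, where only two planes occur and their union misses at least two nonzero vectors, and the generic case, handled by an explicit choice in a basis $a_0,a_1,a_2$.
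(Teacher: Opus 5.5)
Your argument is correct, and it takes a different route from the paper.

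\textbf{The paper's route.} It proves the lemma by direct case analysis with explicit witnesses. For $k=3$ it takes $x=a_0+a_1+a_2$, or any vector off the common plane. For $k=4$ it either takes $x$ off a common plane, or, after a cyclic relabeling making $a,b,c$ a basis, takes $x=a+b+c$ when $d=a+c$ and $x=a+c$ otherwise.

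\textbf{Your route.} You encode the conditions as the affine system $f_i(x)=1$ with $f_i=\br{\cdot}{a_i}{a_{i+1}}$. The duality criterion over $\Z_2$ then reduces existence to excluding odd linear relations among the $f_i$. With $k\le 4$ the only odd sizes are $1$ and $3$. Size $1$ is excluded because each $f_i\neq 0$. Size $3$ is excluded by your observation that three distinct consecutive planes $P_{i-1},P_i,P_{i+1}$ meet pairwise in the lines spanned by $a_i$ and $a_{i+1}$, and these differ. All the steps check out, including coincident functionals (e.g.\ $P_0=P_1$ when $a_0=a_2$), which your criterion absorbs without a separate case.

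\textbf{Comparison.} The paper's version gives concrete formulas for $x$ and is quicker to verify line by line. Yours treats $k=3$ and $k=4$ uniformly. It also shows exactly where $k\le 4$ enters: every $3$-subset of indices is cyclically consecutive, and there is no room for a $5$-term relation. This fits the fact that the statement fails for $k=5$. For instance, the five planes for $(e_1,e_2,e_3,e_1+e_2,e_1+e_3)$ cover all of $\W^\times$.

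\textbf{A remark on your fallback.} In its ``generic'' case $a_0\ne a_2$, $a_1\ne a_3$, the vectors $a_0,a_1,a_2$ need not be independent (take $a_2=a_0+a_1$). You would need a cyclic relabeling as the paper does. Your main argument does not depend on this.
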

\begin{proof}
    For $k=3$, the vectors $a_0$, $a_1$, and $a_2$ are distinct.  
    If they form a basis, take $x=a_0+a_1+a_2$.  
    Otherwise they are the three nonzero vectors in a plane, and any vector outside that plane can be used as~$x$.
    
    For $k=4$, write the cyclic sequence as $a,b,c,d$.
    If these vectors lie in a plane, take $x$ outside that plane.
    Otherwise, after a cyclic relabeling, we may assume that $a,b,c$ form a basis.
    If $d=a+c$, take $x=a+b+c$.
    Otherwise, $d\notin\spn{a,c}$, since $d\ne a,c$, and we take $x=a+c$.
    In both cases,
    $$
        [x,a,b]=[x,b,c]=[x,c,d]=[x,d,a]=1.
    $$
\end{proof}

See Figure~\ref{fig:shortcycle} for illustrations.

\begin{figure}
    \centering
    \begin{subfigure}{.43\textwidth}\centering
        \begin{tikzpicture}[scale=.85]
            \coordinate (a) at (90:1.35); \coordinate (b) at (210:1.35);
            \coordinate (c) at (330:1.35); \coordinate (x) at (0,0);
            \draw[boundary] (a)--(b)--(c)--cycle;
            \foreach \v in {a,b,c}{\draw[edge] (x)--(\v);\node[dot] at (\v) {};}
            \node[dot] at (x) {};
            \node[lab,above] at (a) {$a$};\node[lab,below left] at (b) {$b$};
            \node[lab,below right] at (c) {$c$};\node[lab,right=4pt] at (x) {$x$};
        \end{tikzpicture}
        \caption{A triangular boundary.}
    \end{subfigure}\hfill
    \begin{subfigure}{.43\textwidth}\centering
        \begin{tikzpicture}[scale=.85]
            \coordinate (a) at (-1.15,1.15);\coordinate (b) at (1.15,1.15);
            \coordinate (c) at (1.15,-1.15);\coordinate (d) at (-1.15,-1.15);
            \coordinate (x) at (0,0);
            \draw[boundary] (a)--(b)--(c)--(d)--cycle;
            \foreach \v in {a,b,c,d}{\draw[edge] (x)--(\v);\node[dot] at (\v) {};}
            \node[dot] at (x) {};
            \node[lab,above left] at (a) {$a$};\node[lab,above right] at (b) {$b$};
            \node[lab,below right] at (c) {$c$};\node[lab,below left] at (d) {$d$};
            \node[lab,right=4pt] at (x) {$x$};
        \end{tikzpicture}
        \caption{A quadrilateral boundary.}
    \end{subfigure}
    \caption{For a triangular or quadrilateral boundary, a central label can be added.}
    \label{fig:shortcycle}
\end{figure}

\begin{corollary}\label{cor:path}
    If $p,a,d,e\in\W^\times$ satisfy $p\ne a$, $a\ne d$, and $d\ne e$, then there is an element $q\in\W^\times$ such that
    $$
        \br{q}{p}{a}=\br{q}{a}{d}=\br{q}{d}{e}=1.
    $$
\end{corollary}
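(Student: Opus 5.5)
We want $q$ with $[q,p,a]=[q,a,d]=[q,d,e]=1$, that is, $q$ avoids the planes $\spn{p,a}$, $\spn{a,d}$, $\spn{d,e}$. The plan is to reduce to Lemma~\ref{lem:shortcycle} with $k=4$ by closing the path $p,a,d,e$ into a $4$-cycle. For this we need the adjacencies $p\ne a$, $a\ne d$, $d\ne e$ (given) and also $e\ne p$, which may fail.

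\emph{Case $e\ne p$.} Apply Lemma~\ref{lem:shortcycle} to the cyclic sequence $(a_0,a_1,a_2,a_3)=(p,a,d,e)$. It gives $q\in\W^\times$ with $[q,p,a]=[q,a,d]=[q,d,e]=[q,e,p]=1$. We simply discard the last condition.

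\emph{Case $e=p$.} Now the required conditions are $[q,p,a]=[q,a,d]=[q,d,p]=1$. If $p\ne d$, then $p,a,d$ is a cyclic sequence of length $3$ with consecutive entries distinct, and Lemma~\ref{lem:shortcycle} with $k=3$ gives $q$ directly. If $p=d$, the conditions reduce to the single condition $[q,p,a]=1$ with $p\ne a$. Here any $q$ outside the plane $\spn{p,a}$ works, for instance any vector completing $p,a$ to a basis.

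There is no real obstacle. The only point needing care is the degenerate coincidences $e=p$ and $p=d$, where Lemma~\ref{lem:shortcycle} cannot be applied verbatim to a $4$-cycle; the case split above handles them.
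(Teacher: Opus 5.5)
Your proof is correct and is the paper's argument: apply Lemma~\ref{lem:shortcycle} with $k=4$ to $(p,a,d,e)$ when $e\ne p$, and with $k=3$ to $(p,a,d)$ when $e=p$. Your extra subcase $p=d$ is vacuous and can be dropped, because $e=p$ together with the hypothesis $d\ne e$ already gives $d\ne p$.
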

\begin{proof}
    If $p\ne e$, apply Lemma~\ref{lem:shortcycle} with $k=4$ and $(a_0,a_1,a_2,a_3)=(p,a,d,e)$.
    If $p=e$, apply it with $k=3$ and $(a_0,a_1,a_2)=(p,a,d)$.
\end{proof}

A \emph{$k$-cycle} is a cycle of length $k$.
In particular, two distinct parallel edges form a $2$-cycle.
By the Jordan curve theorem, this cycle separates $S^2$ into two closed discs~$D_1$ and~$D_2$.
In each $D_i$, identify the two boundary edges by a homeomorphism that fixes their endpoints.  
The quotient is again a sphere, and we denote the induced plane graph by $G_i$.
Its faces are precisely the triangular faces that lay in $D_i$ before the identification, and their three vertices remain distinct.

\begin{lemma}\label{lem:split2}
    Let $G$ be a loopless plane triangulation with a $2$-cycle on vertices $a$ and $b$, 
    and let $G_1$ and $G_2$ be the two plane triangulations just described.
    Each $G_i$ has fewer vertices than $G$.  
    If both admit characteristic functions, then so does $G$.
\end{lemma}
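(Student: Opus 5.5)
Take characteristic functions $\lambda_1$ on $G_1$ and $\lambda_2$ on $G_2$. Identifying $G_i$ with the part of $G$ lying in $D_i$, the vertex sets $V(G_1)$ and $V(G_2)$ cover $V(G)$ and meet exactly in $\{a,b\}$. Every facial triangle of $G$ lies in $D_1$ or in $D_2$, and it is a face of the corresponding $G_i$ with the same three vertices. So it suffices to make $\lambda_1$ and $\lambda_2$ agree on $a$ and $b$ and then glue them.

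For the vertex count: each disc $D_i$ contains at least one face of $G$, and that face has a third vertex besides $a$ and $b$. Since $a$ and $b$ are distinct, this vertex lies in the interior of $D_i$. Hence each side of the $2$-cycle has an interior vertex, and that vertex is missing from $G_{3-i}$. Thus $|V(G_i)|<|V(G)|$. I should also check that each $G_i$ is a plane triangulation: it is finite, connected and loopless (the identified edge joins $a\ne b$), and its faces are the faces of $G$ in $D_i$, whose three vertices stay distinct, as already noted.

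For the matching step, note that $a$ and $b$ are adjacent in each $G_i$. So $\lambda_i(a)\ne\lambda_i(b)$, and both are nonzero, which makes each pair linearly independent in $\W$. Hence there is $g\in\GL(\W)=\GL_3(\Z_2)$ with $g\lambda_2(a)=\lambda_1(a)$ and $g\lambda_2(b)=\lambda_1(b)$: extend each independent pair to a basis and map one basis to the other. Since $\det g=1$ over $\Z_2$, the map $g\circ\lambda_2$ is still a characteristic function on $G_2$, because $[ga,gb,gc]=\det(g)[a,b,c]$.

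Finally, define $\lambda=\lambda_1$ on $V(G_1)$ and $\lambda=g\circ\lambda_2$ on $V(G_2)$. This is well defined because the two maps agree on the common vertices $a$ and $b$. Each facial triangle of $G$ is a face of some $G_i$, so condition~\eqref{eq:char} holds there. There is no real obstacle; the only point needing care is that the faces of $G$ in $D_i$ are exactly the faces of $G_i$, which the construction provides.
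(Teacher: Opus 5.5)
Your proposal is correct and follows the same route as the paper: an interior vertex on each side of the $2$-cycle gives the vertex count, and an element of $\GL(3,\Z_2)$ matching the independent label pairs at $a$ and $b$ lets you glue the two characteristic functions. You also spell out details the paper leaves implicit, namely why the labels at $a$ and $b$ are independent and why the transformed function is still characteristic (every element of $\GL(3,\Z_2)$ has determinant~$1$).
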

\begin{proof}
    Each disc contains a vertex other than $a$ and $b$, 
    since it is triangulated by faces with three distinct vertices.
    Thus $|V(G_i)|<|V(G)|$.
    In each characteristic function, the labels at $a$ and $b$ are independent.
    An element of $\GL(3,\Z_2)$ sends either ordered independent pair to the other.  
    Apply such an element to one function and glue the two functions along $a$ and $b$.
\end{proof}

For a simple plane triangulation, a \emph{separating triangle} is a $3$-cycle with vertices on both sides.  
A $3$-cycle that does not bound a face is separating.  
Cutting along a separating triangle and adding a triangular face on each side gives two smaller plane triangulations.

\begin{lemma}\label{lem:split3}
    If the two plane triangulations obtained by splitting a simple plane triangulation at a separating triangle admit characteristic functions, then the original triangulation does as well.
\end{lemma}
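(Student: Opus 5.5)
The plan is the same gluing argument as in Lemma~\ref{lem:split2}, with the $\GL(3,\Z_2)$ step now matching three labels. Let $G$ be the simple plane triangulation and $T=abc$ the separating triangle. Let $G_1$ and $G_2$ be the two triangulations obtained by cutting along $T$. In each $G_i$ the triangle $abc$ is a facial triangle, namely the added face. Let $\lambda_1$ and $\lambda_2$ be characteristic functions on $G_1$ and $G_2$. By~\eqref{eq:char} applied to the face $abc$ of $G_i$, the ordered triples $(\lambda_i(a),\lambda_i(b),\lambda_i(c))$ are bases of $\W$ for $i=1,2$.

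Since $\GL(3,\Z_2)$ acts simply transitively on ordered bases of $\W$, there is a unique $g\in\GL(3,\Z_2)$ with $g\lambda_2(v)=\lambda_1(v)$ for $v\in\{a,b,c\}$. Composition with $g$ preserves condition~\eqref{eq:char}, because $\det(gx,gy,gz)=\det(g)\det(x,y,z)$ and $\det g=1$ over $\Z_2$. It also preserves $\W^\times$. Hence $g\circ\lambda_2$ is again a characteristic function on $G_2$, and it agrees with $\lambda_1$ on $a$, $b$, $c$. Define $\lambda$ on $V(G)=V(G_1)\cup V(G_2)$ by $\lambda_1$ on $V(G_1)$ and by $g\circ\lambda_2$ on $V(G_2)$. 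This is well defined because $V(G_1)\cap V(G_2)=\{a,b,c\}$.

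It remains to check~\eqref{eq:char} for every face of $G$. Here I use the description of the cut. By the Jordan curve theorem, $T$ bounds two closed discs. Every face of $G$ lies in exactly one of them, so it is a face of $G_1$ or of $G_2$ other than the added triangle. Conversely, $T$ itself is not a face of $G$, since it has vertices on both sides. Each face of $G$ is therefore a face of some $G_i$, and $\lambda$ restricts to a characteristic function there, so $\lambda$ satisfies~\eqref{eq:char} on that face.

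I expect no real obstacle. The only point needing care is the bookkeeping identifying the faces of $G$ with the non-added faces of $G_1$ and $G_2$, and simplicity of $G$ ensures the cut produces genuine triangulations.
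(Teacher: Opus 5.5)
Your proposal is correct and follows the same route as the paper: the labels on the added face $abc$ form an ordered basis in each $G_i$, and an element of $\GL(3,\Z_2)$ matches them so the two functions can be glued along the triangle. Your checks that $\det g=1$ and that every face of $G$ is a non-added face of some $G_i$ make explicit what the paper leaves implicit.
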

\begin{proof}
    The labels on each added face form an ordered basis.  
    Match these bases by an element of $\GL(3,\Z_2)$ and glue the functions along the triangle.
\end{proof}

\begin{lemma}\label{lem:low}
    Let $v$ be a vertex of degree $3$ or $4$ in a simple plane triangulation $G$.  
    Delete $v$ and triangulate the resulting face, adding one diagonal in the degree-four case.  
    Denote the resulting plane triangulation by~$G'$.  
    Every characteristic function on~$G'$ extends to a characteristic function on $G$.
\end{lemma}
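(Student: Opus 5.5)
The plan is to restrict the given characteristic function $\lambda'$ on $G'$ to the neighbours of $v$, and then use Lemma~\ref{lem:shortcycle} to pick a label $\lambda(v)$. Deleting $v$ does not delete any vertex other than $v$, so $V(G')=V(G)\setminus\{v\}$. We keep $\lambda=\lambda'$ on $V(G')$ and only need to choose $\lambda(v)\in\W^\times$.

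First I check which facial triangles need attention. Every facial triangle of $G$ not containing $v$ is also a facial triangle of $G'$, since only the faces around $v$ were removed and the new faces lie inside the link cycle. So condition~\eqref{eq:char} holds for all such faces automatically. The remaining faces are $v a_i a_{i+1}$, where $a_0,\ldots,a_{k-1}$ is the link cycle of $v$ in cyclic order and $k=\deg v\in\{3,4\}$. Because $G$ is simple, and each face has three distinct vertices, the $a_i$ are distinct vertices. Each consecutive pair $a_ia_{i+1}$ is an edge of $G$ that survives in $G'$ as a boundary edge of a new face. Its endpoints therefore lie on a common facial triangle of $G'$, and so $\lambda'(a_i)\ne\lambda'(a_{i+1})$.

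With $a_i$ now denoting the labels $\lambda'(a_i)$, the hypotheses of Lemma~\ref{lem:shortcycle} are met. That lemma gives $x\in\W^\times$ with $\br{x}{a_i}{a_{i+1}}=1$ for all $i$, and we set $\lambda(v)=x$. This verifies~\eqref{eq:char} on the faces around $v$, so $\lambda$ is a characteristic function on $G$.

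The main point to get right is the face bookkeeping. Every old face avoiding $v$ must remain a face of $G'$, and every link edge must lie on a face of $G'$ so that its endpoints receive distinct labels. In the degree-four case the added diagonal might be parallel to an existing edge, making $G'$ non-simple. That is harmless, since $G'$ is still a plane triangulation in the paper's sense and distinctness of labels along link edges still holds. The only case where this needs a word is $k=3$, where the new face is the triangle $a_0a_1a_2$ itself.
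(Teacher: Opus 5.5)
Your proposal is correct and follows the paper's proof. You keep $\lambda'$ on $V(G)\setminus\{v\}$, observe that consecutive neighbours of $v$ get distinct labels because the link edges survive in $G'$, and let Lemma~\ref{lem:shortcycle} supply the label at $v$. Your remarks on the parallel diagonal and the $k=3$ case match the paper's brief comments, which also single out $G=K_4$, where $G'$ is $K_3$ with its two faces.
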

\begin{proof}
    The neighbors of $v$ occur on a cycle of length three or four.
    Consecutive boundary labels are distinct, so Lemma~\ref{lem:shortcycle} supplies the label at $v$.
    If the new diagonal has the same endpoints as an edge outside the face, it is kept as a parallel edge.  
    For $G=K_4$, deleting a vertex leaves $K_3$ with its two faces, which is allowed.
\end{proof}

\section{Local Reductions}\label{sec:light}

We use the following classical result of Wernicke~\cite{Wernicke1904}.
\begin{lemma}\label{lem:light}
    Every simple plane triangulation with minimum degree at least five has an edge whose endpoint degrees are $(5,5)$ or $(5,6)$.
\end{lemma}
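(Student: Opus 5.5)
We prove Lemma~\ref{lem:light} by discharging, starting from Euler's formula. Let $G$ be a simple plane triangulation with $n$ vertices and minimum degree at least five, and suppose, for a contradiction, that no edge joins a vertex of degree~$5$ to a vertex of degree~$5$ or~$6$. Since $G$ is a triangulation, $|E|=3n-6$. Hence
$$
    \sum_{v}(\deg v-6)=2|E|-6n=-12 .
$$
Give each vertex $v$ the initial charge $\deg v-6$, so that the total charge is $-12$. Only the vertices of degree~$5$ have negative charge, namely $-1$. The plan is to redistribute charge so that every vertex ends with nonnegative charge. Since redistribution preserves the total, this contradicts the total being $-12$.

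\textbf{Discharging rule.} Each vertex of degree $d\ge7$ sends $1/5$ to each neighbor of degree~$5$.

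\textbf{Vertices of degree~$5$.} By assumption, every neighbor of a degree-$5$ vertex has degree at least~$7$. Such a vertex therefore receives $5\cdot\tfrac15=1$ and ends with charge~$0$.

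\textbf{Vertices of degree~$6$.} These neither send nor receive charge, so they keep charge~$0$.

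\textbf{Vertices of degree $d\ge7$.} This is the step that needs the planar structure. The neighbors of $v$ form a cycle $w_1w_2\cdots w_d$ in the link of~$v$, since consecutive neighbors span a facial triangle with~$v$ and the graph is simple. Two consecutive neighbors $w_i,w_{i+1}$ are adjacent, so they cannot both have degree~$5$. Hence at most $\lfloor d/2\rfloor$ neighbors of $v$ have degree~$5$, and $v$ ends with charge at least
$$
    d-6-\frac{\lfloor d/2\rfloor}{5}.
$$
For $d=7$ this bound is $1-\tfrac35>0$. For larger $d$ it only grows, because $d-6$ increases by~$1$ per step while $\lfloor d/2\rfloor/5$ increases by at most $\tfrac15$. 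So every vertex of degree at least~$7$ ends with positive charge.

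All final charges are therefore nonnegative, which contradicts the total charge being $-12$. This proves the lemma.

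The main point to check is the claim about the neighbors of $v$: they must form a cycle in which consecutive vertices are adjacent. This holds because $G$ is simple and every face is a triangle, so the faces around $v$ are $vw_iw_{i+1}$, and these contribute the edges $w_iw_{i+1}$. The $w_i$ are distinct by simplicity, so this is a genuine cycle. The cycle argument does not use the absence of separating triangles, so it needs no further hypothesis.
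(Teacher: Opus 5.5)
Your discharging argument is correct and complete. The Euler count gives $\sum_v(\deg v-6)=-12$. Under the contrary assumption, each degree-$5$ vertex receives $1/5$ from each of its five distinct neighbors of degree at least~$7$. The link of a vertex $v$ of degree $d$ is a genuine $d$-cycle, so at most $\lfloor d/2\rfloor$ neighbors of $v$ have degree~$5$. Together these make every final charge nonnegative, which is the required contradiction.

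The paper does not prove this lemma. It quotes it as a classical result of Wernicke (1904) and uses it as a black box. Your proof is essentially the standard proof of Wernicke's theorem. Including it makes the paper self-contained at the cost of a paragraph, whereas the citation keeps the exposition short.

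Your write-up also makes clear exactly where the triangulation structure is used beyond Euler's formula. For $d\ge 8$, even the crude bound of $d/5$ on the charge sent leaves $d-6-d/5\ge 2/5>0$. For $d=7$, the alternation along the link cycle is needed to reduce the outflow from $7/5$ to $3/5$.

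You are also right that no hypothesis beyond simplicity and minimum degree five enters. In particular, the absence of separating triangles, which holds where the paper applies the lemma, plays no role, so your argument covers the lemma exactly as stated.
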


\begin{figure}
    \centering
    \begin{subfigure}{.32\textwidth}\centering
        \begin{tikzpicture}[scale=.86]
            \def\markpaths{1}
            \fivepatch{u}{v}{x_0}{x_1}{x_2}{x_3}{x_4}{x_5}
        \end{tikzpicture}
        \caption{Contract two paths.}
    \end{subfigure}\hfill
    \begin{subfigure}{.33\textwidth}\centering
        \begin{tikzpicture}[scale=.77]
            \coordinate (B) at (-2,0);\coordinate (A) at (-.75,0);
            \coordinate (C) at (.75,0);\coordinate (D) at (2,0);
            \foreach \a/\b in {B/A,C/D}{
                \draw[edge] (\a)--(\b);
                \draw[edge] (\a) to[bend left=40] (\b);
                \draw[edge] (\a) to[bend right=40] (\b);
            }
            \draw[edge] (A)--(C);
            \foreach \bend in {25,50}{
                \draw[edge] (A) to[bend left=\bend] (C);
                \draw[edge] (A) to[bend right=\bend] (C);
            }
            \foreach \v in {B,A,C,D}{\node[dot] at (\v) {};\node[lab,below=8pt,fill=white] at (\v) {$\v$};}
            \node[lab] at (-1.4,.85) {$2$};\node[lab] at (0,1.05) {$4$};\node[lab] at (1.4,.85) {$2$};
        \end{tikzpicture}
        \caption{Eight facial digons.}
    \end{subfigure}\hfill
    \begin{subfigure}{.28\textwidth}\centering
        \begin{tikzpicture}[scale=.75]
            \foreach \x/\v in {-1.8/B,-.6/A,.6/C,1.8/D}{\coordinate (\v) at (\x,0);}
            \draw[boundary] (B)--(A)--(C)--(D);
            \foreach \v in {B,A,C,D}{\node[dot] at (\v) {};\node[lab,below=5pt] at (\v) {$\v$};}
            \node[lab] at (0,-1.0) {$|V(G')|=|V(G)|-4$};
        \end{tikzpicture}
        \caption{Remove the digons.}
    \end{subfigure}
    \caption{The $(5,5)$ reduction.}
    \label{fig:55reduction}
\end{figure}

For the two reductions below, assume that $G$ is a simple plane triangulation with no separating triangle and with minimum degree at least five.

If $u$ and $v$ are adjacent, they have exactly two common neighbors.  
Any other common neighbor would give a nonfacial $3$-cycle.
Consequently, the faces incident with $u$ or $v$ form a triangulated disc with a simple boundary in each case below.

Also, if two nonconsecutive neighbors of $u$ were adjacent, their edge together with $u$ would give a separating triangle. 

Suppose that $\deg(u)=\deg(v)=5$.  
Write the boundary of this disc cyclically as $x_0x_1x_2x_3x_4x_5x_0$, so that its eight triangular faces are
\begin{equation}\label{eq:55faces}
    \begin{gathered}
         ux_0x_1,\quad ux_1x_2,\quad ux_2x_3,\\
         vx_3x_4,\quad vx_4x_5,\quad vx_5x_0,\\
         uvx_0,\quad uvx_3
    \end{gathered}
\end{equation}
as in Figure~\ref{fig:55reduction}(a).

Contract the two paths
$$
     x_0-u-x_2 \wideand x_3-v-x_5.
$$
They are induced and vertex-disjoint.
No loops arise, since the contracted paths are induced.
Every face outside the disc has vertices in three distinct contraction classes, so it remains triangular.
Write $A=\{x_0,u,x_2\}$ and $C=\{x_3,v,x_5\}$ for the two contraction classes, and put $B=x_1$ and $D=x_4$.

All eight faces in \eqref{eq:55faces} become $2$-faces. 
More precisely, the local graph consists of three parallel edges between $B$ and $A$, five between $A$ and $C$, and three between~$C$ and~$D$.
There are respectively two, four, and two $2$-faces in these three bundles as in Figure~\ref{fig:55reduction}(b). 

To remove a $2$-face bounded by $\alpha$ and $\beta$, delete $\alpha$ and merge the $2$-face with the face on the other side of $\alpha$.  
If that other face is a triangle, its side $\alpha$ is replaced by $\beta$.
Its three vertices do not change.
If it is a $2$-face, the merged face is again a $2$-face.  
This operation changes the cell decomposition, not the underlying sphere.
See Figure~\ref{fig:digon}.

\begin{figure}
    \centering
    \begin{subfigure}{.44\textwidth}\centering
        \begin{tikzpicture}[scale=1.02]
            \coordinate (a) at (-1.25,0);
            \coordinate (b) at (1.25,0);
            \coordinate (c) at (0,-1.6);
            \fill[pathA!8] (a) to[bend left=35] (b) to[bend left=15] (a);
            \draw[boundary] (a) to[bend left=35] node[lab,above] {$\beta$} (b);
            \draw[contractB] (a) to[bend right=15] node[lab,below=2pt] {$\alpha$} (b);
            \draw[edge] (a)--(c)--(b);
            \foreach \v in {a,b,c}\node[dot] at (\v) {};
            \node[lab,left] at (a) {$a$};\node[lab,right] at (b) {$b$};\node[lab,below] at (c) {$c$};
            \node[lab] at (0,.25) {$2$-face};
        \end{tikzpicture}
        \caption{A $2$-face next to a triangle.}
    \end{subfigure}\hfill
    \begin{subfigure}{.44\textwidth}\centering
        \begin{tikzpicture}[scale=1.02]
            \coordinate (a) at (-1.25,0);
            \coordinate (b) at (1.25,0);
            \coordinate (c) at (0,-1.6);
            \fill[pathA!8] (a) to[bend left=35] (b)--(c)--cycle;
            \draw[boundary] (a) to[bend left=35] node[lab,above] {$\beta$} (b);
            \draw[edge] (a)--(c)--(b);
            \foreach \v in {a,b,c}\node[dot] at (\v) {};
            \node[lab,left] at (a) {$a$};\node[lab,right] at (b) {$b$};\node[lab,below] at (c) {$c$};
            \node[lab] at (0,-.55) {$abc$};
        \end{tikzpicture}
        \caption{Delete $\alpha$ and merge the two faces.}
    \end{subfigure}
    \caption{Removing a facial digon.}
    \label{fig:digon}
\end{figure}

Remove these eight $2$-faces by the operation above.
All other parallel edges are retained.
The image of the local part is the path $B-A-C-D$ as in Figure~\ref{fig:55reduction}(c).
Then we obtain a plane triangulation $G'$ with four fewer vertices, and we call it the \emph{$(5,5)$ reduction} of~$G$.

\begin{lemma}\label{lem:55}
    Every mod~$2$ characteristic function on $G'$ obtained from $G$ by the $(5,5)$ reduction gives a mod~$2$ characteristic function on~$G$.
\end{lemma}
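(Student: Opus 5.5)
The plan is to keep every label of $G'$ on the vertices outside the disc, give each original vertex the label of the class it was contracted into, and then choose the two missing labels at $u$ and $v$ explicitly with the help of Corollary~\ref{cor:path}.

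Let $\lambda'$ be a characteristic function on $G'$ and write
$$
a=\lambda'(A),\quad b=\lambda'(B),\quad c=\lambda'(C),\quad d=\lambda'(D).
$$
Define $\lambda$ on $G$ by keeping $\lambda'$ on every vertex outside the disc, and put
$$
\lambda(x_0)=\lambda(x_2)=a,\quad \lambda(x_1)=b,\quad \lambda(x_3)=\lambda(x_5)=c,\quad \lambda(x_4)=d.
$$
Every face of $G$ outside the disc survives in $G'$ as a triangle on the contraction classes of its three vertices. The removal of $2$-faces only replaces sides and never changes vertex sets. Hence such a face has the same three labels as a facial triangle of $G'$, and condition \eqref{eq:char} holds for it. The edges $BA$, $AC$ and $CD$ survive in $G'$ (the path $B-A-C-D$), so $b\ne a$, $a\ne c$ and $c\ne d$.

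It remains to choose $\lambda(u)$ and $\lambda(v)$ so that the eight faces in \eqref{eq:55faces} satisfy \eqref{eq:char}. With the labels above, these conditions read
$$
[u,a,b]=[u,a,c]=1,\qquad [v,c,d]=[v,c,a]=1,\qquad [u,v,a]=[u,v,c]=1.
$$
Here we write $u,v$ for their labels, and faces with equal label triples give the same condition.

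First apply Corollary~\ref{cor:path} to $(p,a,d,e)=(b,a,c,d)$, whose hypotheses are exactly the three inequalities above. This gives $q\in\W^\times$ with $[q,b,a]=[q,a,c]=[q,c,d]=1$. Then set
$$
\lambda(u)=q+a \wideand \lambda(v)=q+c.
$$
Both labels are nonzero, since $q,a,c$ are independent. Multilinearity and the alternating property of the determinant give the six conditions:
$$
\begin{aligned}
&[q+a,a,b]=[q,a,b]=1, &&[q+a,a,c]=[q,a,c]=1,\\
&[q+c,c,d]=[q,c,d]=1, &&[q+c,c,a]=[q,c,a]=1,\\
&[q+a,q+c,a]=[q,c,a]=1, &&[q+a,q+c,c]=[a,q,c]=1.
\end{aligned}
$$
So $\lambda$ is a characteristic function on $G$.

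The main obstacle is finding the right ansatz for $u$ and $v$, since the determinants $[a,b,c]$ and $[a,c,d]$ are uncontrolled. The shifts by $a$ and by $c$ are what make all the unknown cross terms vanish. The bookkeeping that exterior faces keep their labels under the contraction and digon removal is routine by construction.
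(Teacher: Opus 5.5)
Your proof is correct and is essentially the paper's argument: your labels $\lambda(u)=q+a$ and $\lambda(v)=q+c$ are the same ansatz $\lambda(v)=\lambda(u)+a+c$ as the paper's $q=p+a+c$, and under it the eight face conditions reduce to exactly your three conditions on $q$. The only difference is the last step. You get $[q,b,a]=[q,a,c]=[q,c,d]=1$ directly from Corollary~\ref{cor:path} on the path $b-a-c-d$, whereas the paper normalizes $a=e_1$, $c=e_2$ and solves the two remaining linear conditions in coordinates; your version has the mild advantage that the $(5,5)$ and $(5,6)$ restorations use the same tool.
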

\begin{proof}
    Let $a,b,c,d \in \W^\times$ be the labels at $A$, $B$, $C$, and $D$, respectively.
    Since $B-A-C-D$ is a path, we have $a\ne b$, $a\ne c$, and $c\ne d$.
    After pullback, the labels on the boundary of the original disc are $(a,b,a,c,d,c)$ as in Figure~\ref{fig:55labels}.
    We need labels $p$ and $q$ at $u$ and $v$, respectively.
    
    After a change of basis, we may assume that $a=e_1$ and $c=e_2$.
    Set $p=(x,y,1)$ and $q=(x+1,y+1,1)=p+a+c$.
    Then
    \begin{equation}\label{eq:55auto}
        \br{p}{a}{c}=\br{q}{c}{a} =\br{p}{q}{a}=\br{p}{q}{c}=1.
    \end{equation}
    
    The only remaining conditions are
    \begin{equation}\label{eq:55linear}
        \br{p}{a}{b}=1 \wideand \br{q}{c}{d}=1.
    \end{equation}
    Write $b=(b_1,b_2,b_3)$ and $d=(d_1,d_2,d_3)$.
    Their left-hand sides are $b_2+yb_3$ and $d_1+(x+1)d_3$, respectively.  
    If $b_3=1$, take $y=1+b_2$.  
    If $b_3=0$, then $b\ne0,a$ implies $b_2=1$, and $y$ is arbitrary.
    If $d_3=1$, take $x=d_1$.  
    If $d_3=0$, then $d\ne0,c$ implies $d_1=1$, and $x$ is arbitrary.  
    The choices of $x$ and $y$ are independent, so both conditions in~\eqref{eq:55linear} hold.
    
    Thus the labels on all eight faces satisfy the characteristic condition by~\eqref{eq:55auto} and~\eqref{eq:55linear}.
    Every other face has the labels pulled back from a triangular face of~$G'$.  
    This proves the lemma.
\end{proof}
\begin{figure}
    \centering
    \begin{subfigure}{.40\textwidth}\centering
        \begin{tikzpicture}[scale=.87]
            \coordinate (B) at (-1.7,0);\coordinate (A) at (-.55,0);
            \coordinate (C) at (.55,0);\coordinate (D) at (1.7,0);
            \draw[boundary] (B)--(A)--(C)--(D);
            \foreach \v/\val in {B/b,A/a,C/c,D/d}{
                \node[dot] at (\v) {};\node[lab,above=4pt] at (\v) {$\val$};
                \node[lab,below=4pt] at (\v) {$\v$};
            }
            \node[lab] at (0,-1.0) {$a\ne b,\quad a\ne c,\quad c\ne d$};
        \end{tikzpicture}
    \end{subfigure}\hfill
    \begin{subfigure}{.53\textwidth}\centering
        \begin{tikzpicture}[scale=.83]
            \fivepatch{p}{q}{a}{b}{a}{c}{d}{c}
            \node[lab] at (0,-2.65) {$q=p+a+c,\qquad p\notin\spn{a,c}$};
        \end{tikzpicture}
    \end{subfigure}
    \caption{Restoration in the $(5,5)$ case.}
    \label{fig:55labels}
\end{figure}

Suppose that $\deg(u)=5$ and $\deg(v)=6$.  
Write the boundary of this disc cyclically as $x_0x_1x_2x_3x_4x_5x_6x_0$.
Its nine triangular faces are
\begin{equation}\label{eq:56faces}
    \begin{gathered}
        ux_0x_1,\quad ux_1x_2,\quad ux_2x_3,\\
        vx_3x_4,\quad vx_4x_5,\quad vx_5x_6,\quad vx_6x_0,\\
        uvx_0,\quad uvx_3.
    \end{gathered}
\end{equation}
Contract the two induced paths $x_0-u-x_3$ and $x_4-v-x_6$, and put
$$
    A=\{x_0,u,x_3\},\quad D=\{x_4,v,x_6\}, \quad B=x_1,\quad C=x_2,\wideand E=x_5.
$$
Eight faces become $2$-faces, while $ux_1x_2$ remains as the triangular face $ABC$.  
There is one $2$-face along each of $AB$ and $AC$, four between $A$ and $D$, and two between $D$ and $E$.
After deleting these $2$-faces, the image of the local part is the triangle $ABC$ together with the path $A-D-E$. 
See Figure~\ref{fig:56reduction}.
\begin{figure}
    \centering
    \begin{subfigure}{.33\textwidth}\centering
        \begin{tikzpicture}[scale=.81]
            \def\markpaths{1}
            \sixpatch{u}{v}{x_0}{x_1}{x_2}{x_3}{x_4}{x_5}{x_6}
        \end{tikzpicture}
        \caption{Contract two paths.}
    \end{subfigure}\hfill
    \begin{subfigure}{.34\textwidth}\centering
        \begin{tikzpicture}[scale=.72]
            \coordinate (A) at (-.9,0);\coordinate (B) at (-2.05,1.05);
            \coordinate (C) at (-2.05,-1.05);\coordinate (D) at (.75,0);\coordinate (E) at (2.1,0);
            \fill[pathA!9] (A)--(B)--(C)--cycle;
            \draw[edge] (A)--(B)--(C)--cycle;
            \draw[edge] (A) to[bend right=35] (B);
            \draw[edge] (A) to[bend left=35] (C);
            \draw[edge] (A)--(D);
            \foreach \bend in {25,50}{\draw[edge] (A) to[bend left=\bend] (D);\draw[edge] (A) to[bend right=\bend] (D);}
            \draw[edge] (D)--(E);
            \draw[edge] (D) to[bend left=40] (E);\draw[edge] (D) to[bend right=40] (E);
            \foreach \v in {A,B,C,D,E}\node[dot] at (\v) {};
            \node[lab,below=8pt] at (A) {$A$};
            \node[lab,above left] at (B) {$B$};
            \node[lab,below left] at (C) {$C$};
            \node[lab,below=8pt,fill=white] at (D) {$D$};
            \node[lab,right=3pt] at (E) {$E$};
        \end{tikzpicture}
        \caption{Eight facial digons.}
    \end{subfigure}\hfill
    \begin{subfigure}{.27\textwidth}\centering
        \begin{tikzpicture}[scale=.72]
            \coordinate (A) at (-.55,0);\coordinate (B) at (-1.65,1);
            \coordinate (C) at (-1.65,-1);\coordinate (D) at (.65,0);\coordinate (E) at (1.8,0);
            \fill[pathA!9] (A)--(B)--(C)--cycle;
            \draw[boundary] (A)--(B)--(C)--cycle;\draw[boundary] (A)--(D)--(E);
            \foreach \v in {A,B,C,D,E}\node[dot] at (\v) {};
            \node[lab,below=5pt] at (A) {$A$};\node[lab,above left] at (B) {$B$};
            \node[lab,below left] at (C) {$C$};\node[lab,below=5pt] at (D) {$D$};
            \node[lab,right=3pt] at (E) {$E$};
        \end{tikzpicture}
        \caption{Remove the digons.}
    \end{subfigure}
    \caption{The $(5,6)$ reduction. }
    \label{fig:56reduction}
\end{figure}
The resulting plane triangulation~$G'$ has four fewer vertices than $G$.
We call this the \emph{$(5,6)$ reduction} of $G$.

\begin{lemma}\label{lem:56}
    Every mod~$2$ characteristic function on $G'$ obtained from $G$ by the $(5,6)$ reduction gives a mod~$2$ characteristic function on $G$.
\end{lemma}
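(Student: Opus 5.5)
The plan is to mirror the proof of Lemma~\ref{lem:55}: pull the labels of $G'$ back to the boundary of the original disc, and then choose labels $p$ at $u$ and $q$ at $v$ so that the nine faces in~\eqref{eq:56faces} satisfy~\eqref{eq:char}.

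Let $a,b,c,d,e\in\W^\times$ be the labels at $A,B,C,D,E$. Since $ABC$ is a facial triangle of $G'$, we have $\br{a}{b}{c}=1$. Since $A-D-E$ is a path in $G'$, we have $a\ne d$ and $d\ne e$. After pullback, the boundary labels $x_0,\dots,x_6$ are
$$
(a,b,c,a,d,e,d).
$$
Every face of $G$ outside the disc keeps the labels of a triangular face of $G'$, so it needs no further check.

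Next I list the conditions coming from~\eqref{eq:56faces}, using that the determinant is symmetric over $\Z_2$. The three faces at $u$ alone give
$$
\br{p}{a}{b}=\br{p}{b}{c}=\br{p}{c}{a}=1.
$$
The four faces at $v$ alone give only two distinct conditions, because $vx_3x_4$ and $vx_6x_0$ both read $\br{q}{a}{d}$, and $vx_4x_5$ and $vx_5x_6$ both read $\br{q}{d}{e}$. The faces $uvx_0$ and $uvx_3$ both give $\br{p}{q}{a}=1$.

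For $p$, I write $p=\alpha a+\beta b+\gamma c$ in the basis $a,b,c$. The three conditions at $u$ force $\gamma=\alpha=\beta=1$, so I take $p=a+b+c$, which is nonzero. Then $p\ne a$.

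It remains to find $q\in\W^\times$ with
$$
\br{q}{p}{a}=\br{q}{a}{d}=\br{q}{d}{e}=1.
$$
This is exactly Corollary~\ref{cor:path} applied to $(p,a,d,e)$. Its hypotheses $p\ne a$, $a\ne d$, and $d\ne e$ have all been verified.

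Hence all nine local faces satisfy~\eqref{eq:char}, and the lemma follows. The only delicate point is the bookkeeping in the pullback. One must confirm that the contraction classes give $x_3=x_0=a$ and $x_6=x_4=d$, and that the face $ux_1x_2$ survives as $ABC$, which is what yields $\br{a}{b}{c}=1$. I expect no real obstacle beyond this check.
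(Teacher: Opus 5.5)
Your proposal is correct and follows essentially the same route as the paper: pull back the boundary labels as $(a,b,c,a,d,e,d)$, set $p=a+b+c$, and obtain $q$ from Corollary~\ref{cor:path} applied to $p,a,d,e$. Your observation that the three conditions at $u$ force $p=a+b+c$ is a small extra remark beyond what the paper states, but it changes nothing in the argument.
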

\begin{proof}
    Write $a$, $b$, $c$, $d$, and $e$ for the labels at $A$, $B$, $C$, $D$, and $E$, respectively.  
    The surviving face and path give $\br{a}{b}{c}=1$, $a\ne d$, and $d\ne e$.
    The labels on the original boundary are therefore $(a,b,c,a,d,e,d)$.
    Give $u$ the label $p=a+b+c$.
    Then 
    \begin{equation}\label{eq:56p}
        \br{p}{a}{b}=\br{p}{b}{c}=\br{p}{c}{a}=1. 
    \end{equation}
    Also $p\ne a$, since $a$, $b$, and $c$ form a basis.
    By applying Corollary~\ref{cor:path} to $p,a,d,e$, we obtain $q$ such that
    \begin{equation}\label{eq:56q}
        \br{q}{p}{a}=\br{q}{a}{d}=\br{q}{d}{e}=1.
    \end{equation}
    Give $v$ the label $q$.
    See Figure~\ref{fig:56labels}.

    All nine faces in \eqref{eq:56faces} satisfy the characteristic condition by \eqref{eq:56p} and~\eqref{eq:56q}.
    The other faces inherit it from $G'$.
\end{proof}
\begin{figure}
    \centering
    \begin{subfigure}{.41\textwidth}\centering
        \begin{tikzpicture}[scale=.82]
            \coordinate (A) at (-.45,0);\coordinate (B) at (-1.7,1.2);
            \coordinate (C) at (-1.7,-1.2);\coordinate (D) at (.8,0);\coordinate (E) at (2,0);
            \fill[pathA!9] (A)--(B)--(C)--cycle;
            \draw[boundary] (A)--(B)--(C)--cycle;\draw[boundary] (A)--(D)--(E);
            \foreach \v in {A,B,C,D,E}\node[dot] at (\v) {};
            \node[lab,above right] at (A) {$a$};\node[lab,above left] at (B) {$b$};
            \node[lab,below left] at (C) {$c$};\node[lab,above=3pt] at (D) {$d$};
            \node[lab,above=3pt] at (E) {$e$};
            \node[lab] at (.05,-1.9) {$[a,b,c]=1,\quad a\ne d,\quad d\ne e$};
        \end{tikzpicture}
    \end{subfigure}\hfill
    \begin{subfigure}{.54\textwidth}\centering
        \begin{tikzpicture}[scale=.81]
            \sixpatch{p}{q}{a}{b}{c}{a}{d}{e}{d}
            \node[lab] at (.15,-2.65) {$p=a+b+c$};
            \node[lab] at (.15,-3.15) {Choose $q$ using the path $p-a-d-e$};
        \end{tikzpicture}
    \end{subfigure}
    \caption{Restoration in the $(5,6)$ case.}
    \label{fig:56labels}
\end{figure}

Now, we are ready to prove our first theorem.

\begin{proof}[Proof of Theorem~\ref{thm:graph}]
    Suppose that a counterexample exists, and choose one, $G$, with as few vertices as possible.  
    The plane triangulation with three vertices is $K_3$ with its two faces.
    Assigning a basis to its vertices gives a mod~$2$ characteristic function.  
    Thus $|V(G)|\ge4$.
    
    By Lemma~\ref{lem:split2}, $G$ has no parallel edges, so it is simple.
    By Lemma~\ref{lem:split3}, it has no separating triangle.
    A simple plane triangulation with at least four vertices has minimum degree at least three.
    Lemma~\ref{lem:low} excludes vertices of degree three or four.  
    Hence $G$ has minimum degree at least five.
    
    Lemma~\ref{lem:light} gives an edge $uv$ with endpoint degrees $(5,5)$ or $(5,6)$.  
    Applying the corresponding reduction, we obtain a loopless plane triangulation~$G'$ with $|V(G)|-4$ vertices.  It need not be simple, but it belongs to the same class in which $G$ was chosen minimal.
    
    By minimality, $G'$ has a characteristic function.
    Lemma~\ref{lem:55} or Lemma~\ref{lem:56} restores a characteristic function on $G$.  
    This is a contradiction.
\end{proof}

\section{Five values and four colors}\label{sec:five-four}

We now prove Theorem~\ref{thm:five-four}.
All triangulations in this section are simple.
A \emph{stellar subdivision of a facial triangle} inserts a new vertex in the face and joins it to its three boundary vertices.  
This preserves simplicity and the triangulation, keeps every old edge, and adds no edge between old vertices.

\begin{lemma}\label{lem:parity}
    Every simple plane triangulation $G$ can be extended by stellar subdivisions of facial triangles to a simple plane triangulation $G'$ in which every vertex has odd degree.  
    Moreover, $G'[V(G)]=G$.
\end{lemma}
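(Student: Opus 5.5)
Subdividing a facial triangle $xyz$ adds $1$ to the degrees of $x$, $y$, $z$, and the new vertex has degree $3$, which is odd. So old vertices are always the ones to fix, and the problem becomes a parity problem on the faces of $G$. Let $S\subseteq V(G)$ be the set of vertices of even degree. We want a multiset of faces of $G$, each subdivided once, such that every vertex of $S$ lies on an odd number of chosen faces and every other vertex lies on an even number. Faces created by earlier subdivisions may also be subdivided, so we have flexibility, but working with the original faces is enough. In $\Z_2$-linear terms, we want a vector in $\Z_2^{F(G)}$ whose image under the vertex--face incidence map $\partial\colon \Z_2^{F(G)}\to\Z_2^{V(G)}$ is the indicator of $S$.

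\emph{Parity of $|S|$.} By the handshake lemma, the number of odd-degree vertices is even. Since $|V|-2$ is even for a triangulation when $|F|=2|V|-4$ (no: $|F|=2|V|-4$ is always even, while $|V|$ may be odd), it is better to avoid a global count and use a local tool instead.

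\emph{Local tool.} Take adjacent vertices $x,y$ and a face $xyz$. Subdivide $xyz$, giving a new vertex $w$ and faces $xyw$, $yzw$, $zxw$. Then subdivide $xyw$. The combined effect on old vertices is that $x$ and $y$ each gain $2$, and $z$ gains $1$. This is awkward, so instead I would use the images of faces directly: if two faces share the edge $xy$ and have apexes $z$ and $z'$, subdividing both toggles only $z$ and $z'$. Composing such moves along a path toggles the two endpoints of any walk in the dual structure. More concretely, the set of toggleable pairs $\{z,z'\}$ consists of the vertices opposite a common edge, and the graph on these pairs is connected. For a vertex $x$, its link cycle $z_1 z_2\cdots z_k$ gives the pairs $\{z_i,z_{i+1}\}$, and the link is connected. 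Hence any even-size set $S$ can be toggled.

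\emph{Odd $|S|$.} Subdividing a single face toggles three vertices, which changes the parity of $|S|$. So every $S$ is achievable: if $|S|$ is odd, first subdivide one face, then pair up the remaining vertices.

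\emph{Subdivided vertices.} Each new vertex starts with degree $3$. The pairing moves must subdivide only original faces of $G$, or else a new vertex inside a face that gets subdivided again gains degree. To ensure this, I would let each original face be used an arbitrary number of times, interpreted as follows. Subdividing a face once and then subdividing one of its three children again adds $1$ to the new vertex's degree, which breaks its parity. So the multiset must reduce mod $2$ to a set of faces, each subdivided exactly once. Then every new vertex has degree $3$, and old vertices change by their incidence counts, as required.

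\emph{Induced subgraph.} A stellar subdivision adds no edge between old vertices, so $G'[V(G)]=G$. Simplicity is preserved as well.

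The main obstacle is checking connectivity of the pair-toggle moves. Moves through link cycles connect all neighbors of a vertex, and since $G$ is connected this propagates to all vertices.
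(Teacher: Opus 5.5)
There is a genuine gap. Your claim that the vertex--face incidence map $\partial\colon \Z_2^{F(G)}\to\Z_2^{V(G)}$ is surjective is false, and so is your assertion that working with the original faces is enough. The surjectivity argument rests on connectivity of the opposite-apex toggle graph, which does not hold in general. Around a vertex $x$ with link $z_1\cdots z_k$, the faces $xz_{i-1}z_i$ and $xz_iz_{i+1}$ share the edge $xz_i$. So the link yields the toggleable pairs $\{z_{i-1},z_{i+1}\}$, not $\{z_i,z_{i+1}\}$, and these connect the link only when $k$ is odd.

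A concrete failure is the triangular bipyramid with equator $abc$ and apexes $n,s$. Every face contains exactly two of $a,b,c$. Hence, for any set $X$ of original faces, the number of equatorial vertices toggled an odd number of times is $\equiv 2|X|\equiv 0 \pmod 2$. But $a,b,c$ have degree $4$ and must all be toggled, while $n,s$ have degree $3$ and must not be. So no choice of original faces, each subdivided once, can work. More generally, any $T\subseteq V(G)$ meeting every face in $0$ or $2$ vertices forces $|S\cap T|$ to be even; the union of two colour classes in an Eulerian triangulation is another instance.

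The missing idea is that you must subdivide faces created by earlier subdivisions. Your objection to this in the ``Subdivided vertices'' paragraph is mistaken. Re-subdividing one child face makes the new vertex's degree even, but re-subdividing two children makes it odd again. Your abandoned computation in the ``Local tool'' paragraph was one step away from the fix, which is the paper's proof:
\begin{itemize}
\item[(1)] Insert $x$ in a face $abc$, then insert $y$ in $abx$ and $z$ in $bcx$.
\item[(2)] The old vertices $a,b,c$ gain $2,3,2$ in degree, while $\deg(x)=5$ and $\deg(y)=\deg(z)=3$. So among the existing vertices only the parity of $b$ changes.
\item[(3)] Perform this once at each original vertex of even degree. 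This proves the lemma directly, with no global linear algebra.
\end{itemize}
Your remarks on simplicity and on $G'[V(G)]=G$ are fine.
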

\begin{proof}
We first change the parity of the degree of one chosen vertex $b$, without changing the parity at any other existing vertex.  
Choose a facial triangle $abc$ containing $b$.  
Insert a vertex~$x$ in~$abc$, then a vertex~$y$ in~$abx$, and finally a vertex~$z$ in~$bcx$.
The face $bcx$ is unchanged by the second insertion.
See Figure~\ref{fig:subdivisions}.

The degrees of $a,b,c$ increase by $2,3,2$, respectively.
The other existing vertices are unchanged.
The new vertices satisfy $\deg(x)=5$ and $\deg(y)=\deg(z)=3$.
Thus only the parity at $b$ changes among the existing vertices.

Perform this operation once at each original vertex of even degree.
Every later operation preserves the parity at all existing vertices other than its chosen vertex.  
Since only original vertices are chosen, all vertices of the final triangulation $G'$ have odd degree.
Each insertion preserves all old edges and adds no edge between old vertices, so $G'[V(G)]=G$.
\end{proof}

\begin{figure}[h]
    \centering
    \begin{subfigure}[t]{.235\textwidth}
        \centering\subdivisionpicture{0}
        \caption{Start with $abc$.}
    \end{subfigure}\hfill
    \begin{subfigure}[t]{.235\textwidth}
        \centering\subdivisionpicture{1}
        \caption{Insert $x$ in $abc$.}
    \end{subfigure}\hfill
    \begin{subfigure}[t]{.235\textwidth}
        \centering\subdivisionpicture{2}
        \caption{Insert $y$ in $abx$.}
    \end{subfigure}\hfill
    \begin{subfigure}[t]{.235\textwidth}
        \centering\subdivisionpicture{3}
        \caption{Insert $z$ in $bcx$.}
    \end{subfigure}
    \caption{Obtain $G'$ from $G$ with respect to $b$.}
    \label{fig:subdivisions}
\end{figure}

Consider a nonzero element $\varepsilon\in \W^\dual$, and define
$$
    A_\varepsilon=\{x\in \W \mid \varepsilon(x)=1\}
$$

\begin{lemma}\label{lem:odd}
    Let $G$ be a simple plane triangulation in which every vertex has odd degree.  
    If a characteristic function $\lambda$ on $G$ has at most five values, then $\im\lambda\subset A_\varepsilon$ for some nonzero $\varepsilon\in \W^\dual$.
\end{lemma}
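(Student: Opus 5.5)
The plan is to show that the nonzero values \emph{missed} by $\lambda$ contain a whole $2$-dimensional subspace $H\subset\W$; then $\varepsilon$ with $\ker\varepsilon=H$ does the job. Since $|\W^\times|=7$ and $\lambda$ takes at most five values, the set $M=\W^\times\setminus\im\lambda$ has at least two elements. Pick distinct $m_1,m_2\in M$ and put $m_3=m_1+m_2\neq 0$. The whole proof reduces to showing $m_3\in M$.

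\textbf{Local step at a vertex.} Let $v$ be a vertex with label $w=\lambda(v)$ and neighbors $x_1,\dots,x_d$ in cyclic order, where $d=\deg v$ is odd. Each facial triangle $vx_ix_{i+1}$ gives $\br{w}{\lambda(x_i)}{\lambda(x_{i+1})}=1$. Pass to the quotient $\pi\colon\W\to\W/\langle w\rangle\cong\Z_2^2$. The condition says exactly that $\pi\lambda(x_i)$ and $\pi\lambda(x_{i+1})$ are linearly independent in the quotient. So both are nonzero, and consecutive images are distinct.

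These images form a closed walk of odd length $d$ in the triangle $K_3$ on the three nonzero elements of $\W/\langle w\rangle$, with consecutive entries distinct. A closed walk of odd length cannot lie on only two vertices, because that subgraph is bipartite. Hence every nonzero coset of $\langle w\rangle$ contains the label of some neighbor of $v$. This is the one place where odd degree is used.

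\textbf{Conclusion.} Suppose, for contradiction, that $m_3\in\im\lambda$, say $m_3=\lambda(v)$. Then $\{m_1,m_2\}=m_1+\langle m_3\rangle$ is a nonzero coset of $\langle m_3\rangle$. By the local step, some neighbor of $v$ has label $m_1$ or $m_2$, contradicting $m_1,m_2\in M$. Therefore $H=\{0,m_1,m_2,m_3\}$ is a plane disjoint from $\im\lambda$.

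Let $\varepsilon\in\W^\dual$ be the nonzero functional with $\ker\varepsilon=H$. Every value of $\lambda$ is nonzero and lies outside $H$, so it satisfies $\varepsilon=1$. Hence $\im\lambda\subset A_\varepsilon$.

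The only nontrivial point is the local parity step: the bipartiteness argument around an odd-degree vertex. Everything else is counting. I expect no real obstacle beyond stating that step carefully, including checking that independence in the quotient is exactly equivalent to the determinant condition.
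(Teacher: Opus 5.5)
Your proposal is correct and follows essentially the same argument as the paper. The paper also takes two missing values $p,q$, supposes $\lambda(v)=p+q$, passes to $\W/\langle p+q\rangle$, and observes that the neighbor labels must alternate between the two remaining nonzero cosets, which is impossible around a vertex of odd degree; it then takes $\varepsilon$ with $\ker\varepsilon=\spn{p,q}$, exactly as you do.
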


\begin{proof}
Choose distinct $p,q\in \W^\times\setminus\im\lambda$, and put  $t=p+q$.
We claim that $t$ is also not in $\im\lambda$.

Suppose instead that $\lambda(v)=t$ for a vertex $v$.  
List its neighbors in cyclic order as $v_1,\ldots,v_k$, where $k=\deg(v)$.
Consider the quotient map~$\pi\colon \W\longrightarrow \W/\langle t\rangle\cong\Z_2^2$.
For each $i$, the triangle $vv_iv_{i+1}$ is facial, with indices taken modulo $k$.  Hence $t,\lambda(v_i),\lambda(v_{i+1})$ form a basis.
Consequently, $\pi(\lambda(v_i))$ and $\pi(\lambda(v_{i+1}))$ are distinct and nonzero.
One of the three nonzero cosets in this quotient is $p+\langle t\rangle=\{p,q\}$.
Neither of its elements is in $\im \lambda$.  
The cyclic list $\pi(\lambda(v_1)),\ldots,\pi(\lambda(v_k))$ therefore uses only the other two nonzero cosets, and these must alternate.  
This forces $k$ to be even, a contradiction.

Thus $p,q,p+q$ are all missing.  
They are precisely the nonzero vectors of $H=\spn{p,q}$, so $\im\lambda\subset \W\setminus H$.
Taking $\varepsilon$ with $\ker\varepsilon=H$ gives $\W\setminus H=A_\varepsilon$, as required.
\end{proof}

\begin{proof}[Proof of Theorem~\ref{thm:five-four}]
    It is enough to show that $E_5$ implies $E_4$.
    Assume $E_5$, and let~$G$ be an arbitrary simple plane triangulation.
    Apply Lemma~\ref{lem:parity} to obtain a simple plane triangulation $G'$ in which every vertex has odd degree and $G'[V(G)]=G$.
    By $E_5$, the graph $G'$ admits a characteristic function $\lambda'$ with at most five values.  
    Lemma~\ref{lem:odd} gives
    $$
        \im\lambda'\subset A_\varepsilon \qquad\text{for some }0\ne\varepsilon\in \W^\dual.
    $$
    Define $\lambda \coloneq \lambda'|_{V(G)}$.
    All edges of $G$ remain in $G'$, so $\lambda$ assigns distinct values to adjacent vertices of $G$.  
    Thus the three vertices of any facial triangle of $G$ have distinct values in $A_\varepsilon$.  
    Any three such values form a basis, so $\lambda$ is a mod~$2$ characteristic function on $G$ with at most four values.
\end{proof}

\section*{Acknowledgments}
The first author thanks Nikolai Erokhovets for bringing this problem to his attention.

\appendix
\section{On Liu's reduction argument}\label{app:Liu}

The proof of~\cite[Lemma~3.1]{Liu2013} uses a normalized matrix truncation to reduce a \emph{niceness} condition to an earlier polytope in a sequence of cuts.
The local determinant identities in that proof do not, however, control all vertex conditions after truncation.
We give an example in which the matrix before truncation is nice, while the truncated matrix is not.

We use Liu's convention for integral characteristic matrices.
For an $r\times(r-3)$ matrix $S$, let $\Delta_{ijk}(S)$ denote the determinant obtained by deleting rows $i,j,k$.
The characteristic condition is $\Delta_{ijk}(S)=\pm1$ whenever $F_i$, $F_j$, and $F_k$ meet at a vertex.
All rows below are listed in facet order.

Take a triangular prism $P_5$ and write $ijk$ for the vertex $F_i\cap F_j\cap F_k$.
Label its facets so that
$$
    V(P_5)=\{123,125,135,234,245,345\}.
$$
Cut off $123$ and call the new facet $F_6$.
The resulting polytope $P_6$ has vertices
$$
    V(P_6)=\{125,135,234,245,345,126,136,236\}.
$$
Let $U$ be the two-edge path from $125$ through $245$ to $234$.
In Liu's notation, this is the eve-structure with ordered facets~$(F_1,F_2,F_3,F_4,F_5)$.
The vertex cut preserves~$U$, as shown in Figure~\ref{fig:Liu-cut}.

\begin{figure}
\centering
\begin{tikzpicture}[
    font=\small,
    line cap=round,
    line join=round,
    liu edge/.style={line width=.6pt},
    liu path/.style={line width=1.8pt},
    liu vertex/.style={circle,fill=black,inner sep=1.4pt}
]
\foreach \shift/\cut in {-3.1/0,3.1/1}{
    \begin{scope}[xshift=\shift cm]
        \coordinate (a) at (0,1.9);
        \coordinate (b) at (-1.85,-1.6);
        \coordinate (c) at (1.85,-1.6);
        \coordinate (d) at (0,-.45);
        \coordinate (e) at (-.68,-1.03);
        \coordinate (f) at (.68,-1.03);

        \ifnum\cut=0
            \draw[liu edge] (a)--(b) (a)--(c) (a)--(d);
            \node[liu vertex] at (a) {};
            \node[above=3pt] at (a) {$123$};
            \node at (0,-2.17) {$P_5$};
        \else
            \coordinate (l) at (-.60,.765);
            \coordinate (m) at (.60,.765);
            \coordinate (n) at (0,.12);
            \fill[black!8] (l)--(m)--(n)--cycle;
            \draw[liu edge] (l)--(m)--(n)--cycle;
            \draw[liu edge] (b)--(l) (c)--(m) (d)--(n);
            \foreach \p in {l,m,n} \node[liu vertex] at (\p) {};
            \node[above left=2pt] at (l) {$126$};
            \node[above right=2pt] at (m) {$136$};
            \node[right=3pt] at (n) {$236$};
            \node at (0,.54) {$F_6$};
            \node at (0,-2.17) {$P_6$};
        \fi

        \draw[liu edge] (b)--(c) (c)--(f) (d)--(f)--(e)--cycle (b)--(e);
        \draw[liu path] (b)--(e)--(d);
        \foreach \p in {b,c,d,e,f} \node[liu vertex] at (\p) {};
        \node[below left=2pt] at (b) {$125$};
        \node[below right=2pt] at (c) {$135$};
        \node[above left=2pt] at (d) {$234$};
        \node[below=2pt] at (e) {$245$};
        \node[below=2pt] at (f) {$345$};
        \node at (-.90,-.60) {$U$};
    \end{scope}
}
\draw[->,line width=.7pt] (-.85,.35)--(.85,.35) node[midway,above=4pt] {cut $123$};
\end{tikzpicture}
\caption{The vertex cut preserves the thick two-edge path $U$.}
\label{fig:Liu-cut}
\end{figure}

Consider the matrix $S_6$ below.
Deleting its last row and last column gives the matrix $R$:
$$
    S_6=\begin{pmatrix} 1&0&0\\ 0&1&0\\ 0&1&0\\ 1&1&0\\ 1&1&1\\ 0&0&1 \end{pmatrix} \longmapsto R= \begin{pmatrix} 1&0\\ 0&1\\ 0&1\\ 1&1\\ 1&1 \end{pmatrix}.
$$
All eight vertex minors of $S_6$ are $\pm1$, so it is a
characteristic matrix of $P_6$.
Its rows $1,3,6$ form the identity matrix, so it is already
normalized at $245$.
Thus $R$ is precisely the truncation prescribed in the proof
of~\cite[Lemma~3.1]{Liu2013}.
The two local minors used there satisfy
$$
    \Delta_{124}(S_6)=\Delta_{124}(R)=-1 \wideand \Delta_{145}(S_6)=\Delta_{145}(R)=0.
$$
By~\cite[(2.1)]{Liu2013}, $S_6$ is therefore $(2,4)$-nice at $U$.
This means that the prescribed matrix extension is characteristic after cutting out $U$.

However,
$$
    \Delta_{123}(R)= \det\begin{pmatrix}1&1\\1&1\end{pmatrix}=0.
$$
The vertex $123$ lies outside $U$ and survives cutting out $U$ in $P_5$.
Any extension of~$R$ by one column and a final row $(0,0,1)$ still has determinant zero at this vertex, by expansion along the final row.
Thus $R$ is not $(2,4)$-nice at $U$.

This example gives a counterexample to the preservation of niceness claimed for the normalized truncation in the proof of~\cite[Lemma~3.1]{Liu2013}.
This reduction is a key step in the proof of~\cite[Theorem~1.2]{Liu2013}.
Therefore, its failure leaves a gap in that reduction argument.


\end{document}